\newtheorem{theorem}{Theorem}[]
\newtheorem{prediction}{Prediction}[]
\newtheorem{definition}[theorem]{Definition}
\newtheorem{example}[theorem]{Example}
\newtheorem{lemma}[theorem]{Lemma}
\newtheorem{computer}[theorem]{Computer Calculation}
\newtheorem{conjecture}[theorem]{Conjecture}
\newtheorem{remark}[theorem]{Remark}
\begin{document}
\noindent {\bf    , Issue  ()}
\section*{Curious Properties of Iterative Sequences}
\vspace{-.30cm}

\begin{center}
\textbf{Shoei Takahashi, Unchone Lee, Hikaru Manabe, Aoi Murakami}\footnote{%
Students at Keimei Gakuin High School in Japan}
\textbf{Daisuke Minematsu, Kou Omori, }\footnote{%
Daisuke Minematsu and Kou Omori are  independent researchers in Japan.}
\textbf{ and Ryohei Miyadera}\footnote{%
Ryohei Miyadera, Ph.D., is a mathematician and mathematics advisor at Keimei Gakuin in Japan.}
\end{center}

\section{Introduction}
In this study, several interesting iterative sequences were investigated.

First, we define the iterative sequences.
Let $\mathbb{N}$ be a set of natural numbers. We fix function $f(n) \in \mathbb{N}$ for $n \in \mathbb{N}$. 
An iterative sequence starts with $n \in \mathbb{N}$, and calculates the sequence
$f(n), f^2(n) = f(f(n)), \cdots, f^{m+1}(n) = f(f^m(n)) \cdots$. We then search for interesting features in this sequence.

Iterative sequences can be useful topics for professional or amateur mathematicians if they choose the proper problem. The authors present new findings in this study.

We begin with the Collatz conjecture, which is one of the most well-known unsolved problems concerning iterative sequences. A mathematician Lothar Collatz presented the following prediction $86$ years ago:
\begin{prediction}
For any natural number $n$, if we apply the function in (\ref{collatz}) repeatedly to $n$, we eventually reach number $1$.   
\begin{equation}
  f(x)=
  \begin{cases}
    \frac{x}{2} & \text{if $x$ is even,} \\
    3x+1      & \text{if $x$ is odd.}\label{collatz}
  \end{cases}
\end{equation}
\end{prediction}

\begin{example}\label{examcollatz}
This function is applied to several numbers. \\
$(i)$ If we start with $67$, then from (\ref{collatz}), we have
$67, 202, 101, 304, 152, 76, 38, 19, 58, 29, 88, 44, 22,$\\
$11, 34, 17, 52, 26, 13, 40, 20, 10, 5, 16, 8, 4, 2, 1$.\\
$(ii)$ If we start with $36$, then from (\ref{collatz}), we have
$36, 18, 9, 28, 14, 7, 22, 11, 34, 17, 52, 26, 13, 40, 20,$
$10, 5, 16, 8, 4, 2, 1$.\\

\begin{minipage}[t]{0.42\textwidth}
\begin{center}
\begin{figure}[H]
\includegraphics[height=3cm]{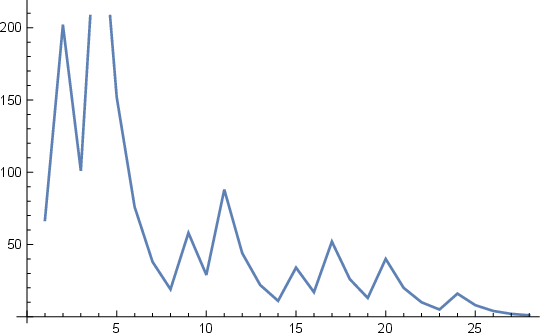}
\caption{Graph of the sequence in (i) of Example \ref{examcollatz}. }\label{chocoof5and3}
\end{figure}
\end{center}
\end{minipage}
\hfill
\begin{minipage}[t]{0.42\textwidth}
\begin{center}
\begin{figure}[H]
\includegraphics[height=3cm]{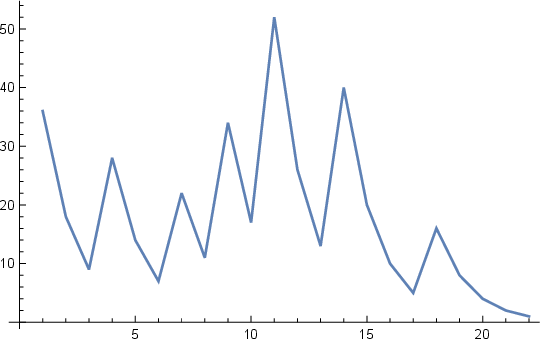}
\caption{Graph of the sequence in (ii) of Example \ref{examcollatz}. }\label{nimof5and3}
\end{figure}
\end{center}
\end{minipage}

The calculations of $(i)$ and $(ii)$ were carried out using the following program:
\url{https://github.com/Shoei256/Curious-Properties-of-Iterative-Sequences-}\\
\url{programs/blob/main/others/Collatz%20problem.py}

\end{example}

Although several mathematicians have studied this problem for 86 years, there is still no proof or counterexample for this prediction.

Because Collatz conjecture is simple and well-known, it has attracted the attention of many people, and many 
amateur mathematicians spend a considerable amount of time on it.

It is good to appreciate this conjecture; however, it is not wise for most people to attempt to resolve it.
The eminent mathematician Paul Erdős said about the Collatz conjecture: "Mathematics may not be ready for such problems." 
For the detail of Collatz conjecture, see \cite{collatzmath}.
Collatz conjecture was independently proposed by other mathematicians Stanisław Ulam, Shizuo Kakutani, Bryan Thwaites, and Helmut Hasse.

In the remainder of this article, we study iterative sequences, such as Kaprekar's routine, the digit factorial process, and the digit power process. 
The authors presented new variants of Kaprekar's routine.

Regarding the digit power and digit factorial processes, two of the authors of the present article have already presented some results in
\cite{IMS2004} and \cite{vismath04}, and it this article we study these processes from a new perspective.
For further information on iterative sequences, see \cite{mathama}.

\section{Kaprekar's routine}
Kaprekar \cite{Kaprekar} discovered an interesting property of an iterative sequence, which was subsequently named Kaprekar's routine.

In this iterative sequence, we start with a four-digit number whose four digits are not the same.
If we repeatedly subtract the highest and lowest numbers constructed from a set of four digits, 
we eventually obtain $6174$. We call $6174$ as the Kaprekar's constant. 

\begin{example}
 $(i)$ If we start with 1234, we obtain:
$4321-1234=3987$, $8730-0378=8352$, and $8532-2358=6174$.
Subsequently, because $7641-1467=6174$, we obtain the same number, $6174$. \\   
$(ii)$ If we start with 1001, we obtain:
$1100-11=1089$, $9810-189=9621$, $9621-1269=8352$, and 
$8532-2358=6174$. Subsequently, we obtained the same number, $6174$.
\end{example}

For details of Kaprekar's routine, see \cite{Kaprekarmath}. There seem to be many things that can be discovered in Kaprekar's routine. See Section \ref{kaprekarva}.

\begin{example}
The authors present a Python program for Kaprekar's routine.
This program searches for four-digit numbers that do not converge to Kaprekar's constant.

The program is available at \url{https://github.com/Shoei256/Curious-}\\
\url{Properties-of-Iterative-Sequences-programs/blob/main/Kaprekar/}\\
\url{Kaprekar.py}.\\
The output is as follows:
\small{
\begin{verbatim}
Four-digit numbers that do not converge to Kaprekar's constant (6174):
[1111, 2222, 3333, 4444, 5555, 6666, 7777, 8888, 9999]
\end{verbatim}}
\end{example}

\section{Variants of Kaprekar's routine}\label{kaprekarva}
Here, we studied a few variants of Kaprekar's routine.
\subsection{The case of base 10}
\begin{definition}
Let $n,u,v$ be  natural numbers. We 
sort the digits of 
$n$ in descending order, and let $\alpha_u$ be the $u$-th largest number.
Subsequently, we sort the digits of 
$n$ in ascending order and let $\beta_v$ be the $v$-th smallest number.
Then, we subtract  $\alpha_u -\beta_v$  to produce the next number, $K_{u,v}(n)$.
\end{definition}

\begin{example}
$(i)$ For any $3$-digit number $n$, $\{K_{2,2}^t(n):t = 1, 2, \dots ,\}$ reaches a fixed point $450$. \\
$(ii.1)$ For any $4$-digit number $n$, $\{K_{3,1}^t(n):t = 1, 2, \dots ,\}$ reaches a fixed point $4995$. \\
$(ii.2)$ For any $4$-digit number $n$, $\{K_{1,2}^t(n):t = 1, 2, \dots ,\}$ reaches one of the fixed points: $9045,4995,4997$. \\
$(iii.1)$ For any $5$-digit number $n$, $\{K_{3,2}^t(n):t = 1, 2, \dots ,\}$ reaches a fixed point $49995$. \\
$(iii.2)$ For any $5$-digit number $n$, $\{K_{4,1}^t(n):t = 1, 2, \dots ,\}$ reaches a fixed point $62748$. \\
The following computer program performs the calculations for these iterative sequences.\\
\url{https://github.com/Shoei256/Curious-Properties-of-Iterative-}\\
\url{Sequences-programs/blob/main/Kaprekar/base10.py}
\end{example}

\subsection{The case of base 2}
In this subsection, we treat all the numbers in base $2$.
\begin{definition}
We choose any natural number $n$ in base $2$. We sort the digits of 
$n$ in descending order, and let $\alpha_2$ be the second-largest number.
Subsequently, we sort the digits of 
$n$ in ascending order and let $\beta_2$ be the second-smallest number.
These numbers may have leading zeros, which are  retained. 
Then, we subtract  $\alpha_2 -\beta_2$  to generate the next number, $K_{2,2}(n)$.
\end{definition}

\begin{conjecture}\label{conjkapre}
Let $n$ be a natural number, such that $m \geq 3$. \\
$(i)$ When the digit length is $2m$, the sequence $\{K_{2,2}^t(n):t = 1, 2, \dots ,\}$ reaches a fixed point $2^{2m}-3 \times 2^m+1$. \\
$(ii)$ When the digit length is $2m+1$, the sequence $\{K_{2,2}^t(n):t = 1, 2, \dots ,\}$ enters a loop of two numbers such that $\{2^{2m+1}-7 \times 2^m+2^{m-2}+1, 2^{2m+1}-7 \times 2^m+10 \times 2^{m-2}+1\} $.
\end{conjecture}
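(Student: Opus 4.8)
\medskip
\noindent\textbf{Proof proposal.}
Fix the common digit length $L$ (so $L=2m$ in part $(i)$ and $L=2m+1$ in part $(ii)$) and regard every iterate as a binary string of length $L$, keeping leading zeros as the definition prescribes; write $s(x)$ for the number of $1$'s in this string, and assume $1\le s(n)\le L-1$ (which is exactly what is needed for $K_{2,2}(n)$ to be defined). The plan is to collapse the problem to a one-dimensional dynamical system, via the observation that $K_{2,2}(x)$ depends on $x$ only through $k:=s(x)$. Indeed, if $1\le k\le L-1$ then the largest rearrangement of the digits is $1^{k}0^{L-k}$, the second largest is $1^{k-1}\,0\,1\,0^{L-k-1}$ of value $\alpha_2=2^{L}-3\cdot2^{\,L-k-1}$, the smallest is $0^{L-k}1^{k}$, and the second smallest is $0^{L-k-1}\,1\,0\,1^{k-1}$ of value $\beta_2=3\cdot2^{\,k-1}-1$, so
\begin{equation}
K_{2,2}(x)=g(k):=2^{L}-3\cdot2^{\,L-k-1}-3\cdot2^{\,k-1}+1,\qquad k=s(x).\label{gkf}
\end{equation}
From \eqref{gkf} I would record the symmetry $g(k)=g(L-k)$ (which halves the casework below) and the bounds $g(1)\le g(k)\le g\bigl(\lfloor L/2\rfloor\bigr)$, which give $1\le g(k)\le 2^{L}-2$ as soon as $L\ge 6$. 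Consequently every iterate is again a length-$L$ string with $s$-value in $\{1,\dots,L-1\}$, so the orbit of $n$ is entirely governed by the self-map $h:=s\circ g$ of the finite set $\{1,\dots,L-1\}$; explicitly $K_{2,2}^{t}(n)=g\bigl(h^{t-1}(s(n))\bigr)$ for $t\ge 1$ and $s\bigl(K_{2,2}^{t}(n)\bigr)=h^{t}(s(n))$.

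The core of the proof is the explicit determination of $h$. I would write $g(k)$ in binary — form $2^{L}-3\cdot2^{L-k-1}=2^{L-k-1}\bigl(2^{k+1}-3\bigr)$, subtract $3\cdot2^{k-1}$ (one borrow), then add $1$ (one carry) — and, treating the cases $k=1$, $2\le k\le m-1$, $k=m$ (and, when $L=2m+1$, also $k=m+1$) individually, with the remaining $k$ obtained from $g(k)=g(L-k)$, conclude
\[
L=2m:\qquad
h(k)=\begin{cases}
2m-3,& k\in\{1,\,2m-1\},\\
2m-1-k,& 2\le k\le m-1,\\
m,& k=m,\\
k-1,& m+1\le k\le 2m-2;
\end{cases}
\]
\[
L=2m+1:\qquad
h(k)=\begin{cases}
2m-2,& k\in\{1,\,2m\},\\
2m-k,& 2\le k\le m-1,\\
m+2,& k\in\{m,\,m+1\},\\
k-1,& m+2\le k\le 2m-1.
\end{cases}
\]

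Granting these tables, the dynamics is transparent. For $L=2m$ the nonnegative integer $|k-m|$ strictly decreases along every $h$-orbit except when $k=m$ (inspect each branch), so every orbit of $h$ reaches the fixed point $m$; since $g(m)=2^{2m}-3\cdot2^{m}+1$ has exactly $m$ ones, it is a true fixed point of $K_{2,2}$, whence $K_{2,2}^{t}(n)=g(m)=2^{2m}-3\cdot2^{m}+1$ for all large $t$, proving $(i)$. For $L=2m+1$ the table shows every $h$-orbit reaches $\{m+1,m+2\}$ in finitely many steps (a value $k<m$ maps to $2m-k$, and the ``$k-1$'' branch then walks it down to $m+2$; a value $k>m+1$ walks down to $m+2$; and $m,m+1,m+2$ each land in $\{m+1,m+2\}$ in one step), and $\{m+1,m+2\}$ is $h$-invariant since $h(m+1)=m+2$ and $h(m+2)=m+1$. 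Hence for all large $t$ the iterate $K_{2,2}^{t}(n)=g\bigl(h^{t-1}(s(n))\bigr)$ alternates between $g(m+1)=g(m)$ and $g(m+2)=g(m-1)$; rewriting \eqref{gkf} turns these into $g(m)=2^{2m+1}-7\cdot2^{m}+10\cdot2^{m-2}+1$ and $g(m-1)=2^{2m+1}-7\cdot2^{m}+2^{m-2}+1$, which is exactly the asserted loop — and it is a genuine $2$-cycle, since $g(m)-g(m-1)=9\cdot2^{m-2}\neq 0$.

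The step I expect to be the real obstacle is deriving the two tables for $h$: one must track carefully which blocks of $1$'s — those coming from $2^{L}-3\cdot2^{L-k-1}$, from $-3\cdot2^{k-1}$, and from the final $+1$ — are disjoint, and handle by hand the boundary cases, namely $k=1$ (where the added $1$ lands on a $1$-bit) and $k$ near the centre (where $L-2k$ shrinks to $2$ and $2^{L-2k}-3$ collapses to a single bit); the symmetry $g(k)=g(L-k)$ keeps this bounded but does not remove it. Everything else — the reduction \eqref{gkf} and the combinatorics of $h$ — is routine. Finally, the hypothesis $m\ge 3$ is used precisely to ensure $L\ge 6$, so that $g(k)\ge 1$ for all admissible $k$ and the range $2\le k\le m-1$ is nonempty; for $m=2$ the map $h$ acquires a second fixed point and part $(i)$ genuinely fails, which explains the restriction.
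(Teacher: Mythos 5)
There is no proof in the paper to compare against: the statement is left as Conjecture \ref{conjkapre}, supported only by the computer verification for digit lengths $6$--$9$ in Table \ref{Table2a}. Your argument therefore goes beyond the paper, and as far as I can check it is correct. The key reduction --- that for a fixed length $L$ (leading zeros retained, which is indeed the convention needed to reproduce the paper's Table \ref{Table2a}, e.g.\ the fixed points $101$ and $10$ for length $4$) the map depends only on the number of ones $k$, via $g(k)=2^{L}-3\cdot 2^{L-k-1}-3\cdot 2^{k-1}+1$ --- is sound, and I verified your two tables for $h=s\circ g$, including the boundary cases ($k=1$; $L-2k\in\{0,1,2\}$) and sample values such as $L=8$, $k=2,3,4$ and $L=9$, $k=4,5,6$, which also reproduce the paper's fixed point $11010001$ and loop $\{110010101,110111001\}$. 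The descent argument ($|k-m|$ strictly decreasing for $L=2m$; funnelling into the $2$-cycle $\{m+1,m+2\}$ for $L=2m+1$) is correct, $g(m+1)=g(m)$ and $g(m+2)=g(m-1)$ rewrite exactly to the conjectured loop values, and $g(m)-g(m-1)=9\cdot 2^{m-2}\neq 0$ confirms a genuine $2$-cycle. So, once the binary-digit derivation of the tables is written out in full (this is the only part you leave as a sketch, and it does require the case analysis you anticipate), your proposal upgrades the paper's conjecture to a theorem. Two points should be made explicit in a final write-up: first, the hypothesis $1\le s(n)\le L-1$, i.e.\ the exclusion of the all-ones input $n=2^{L}-1$, is genuinely needed (for that $n$ the process collapses to $0$), exactly as repdigits are excluded in the classical Kaprekar routine, so the conjecture as literally stated needs this amendment; second, the fixed-length convention for the iterates (padding with leading zeros to the original length $L$) should be stated, since the paper's definition only says the leading zeros of $\alpha_2$ and $\beta_2$ are retained. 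Your closing remark about $m=2$ is also accurate: for $L=4$ the map $h$ has the extra fixed point $k=1$ (giving $10$), which is why the restriction $m\ge 3$ appears.
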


For the numbers with digit lengths of $6,7,8,9$ in base $2$, Conjecture \ref{conjkapre} is valid as presented in Table \ref{Table2a}.

The following computer program performs the calculations and the results are listed in Table \ref{Table2a}.
\url{https://github.com/Shoei256/Curious-Properties-of-Iterative-}\\
\url{Sequences-programs/blob/main/Kaprekar/base2.py}

\begin{table}[H]
\caption{}
\label{Table2a}
\vspace{0.1cm}
\begin{tabular}{|c|c|c|c|}
\hline
Base & Digit & fixed points & loops  \\ 
  & length &  &   \\ \hline
  & 4 & 101, 10 & none \\ \cline{2-4}
2 & 5 &  none  & \{110,1111\} \\ \cline{2-4}
  & 6 & $101001=2^{2 \cdot 3}-3 \cdot 2^3+1$ &  none    \\ \cline{2-4}
  & 7 & none & $\{1001011, 1011101\}$ \\ 
  &  &  & $=\{2^{2\cdot 3+1}-7 \cdot 2^3+10 \cdot 2^{3-2}+1 $ \\ 
  &  &  & $, 2^{2 \cdot 3+1}-7 \cdot 2^3+ 2^{3-2}+1  \}$ \\ \cline{2-4}
  & 8 & $11010001=2^{2 \cdot 4}-3 \cdot 2^4+1$ & none  \\ \cline{2-4}
  & 9 & none & \{110010101,110111001\}   \\ 
  &  &  & $\{2^{2\cdot 4+1}-7 \cdot 2^4+10 \cdot 2^{4-2}+1  $  \\ 
  &  & & $, 2^{2\cdot 4+1}-7 \cdot 2^4+ 2^{4-2}+1 \}$ \\ \hline
\end{tabular}
\end{table}

\section{Digits Factorial Process}
This process was studied by \cite{vismath04} and \cite{mathama}; however, in this study, we present new results.
Lemma \ref{looplemma1}, Theorem \ref{theoremfordfp}, and Lemma \ref{smallerthan1010} are new results.

 We define  $\mathbf{dfp}(n) = \sum^{m}_{k=1} n_k!$
, where $ \left\{n_1,n_2,\cdots ,n_m \right\}$ is the list of the digits of an integer n. We start with any non-negative integer n, and repeatedly apply the function $\mathbf{dfp}$ to generate a sequence of integers.

\begin{example}
$\mathbf{dfp}(123)=1!+2!+3!=9$,
$\mathbf{dfp}^2(123)= \mathbf{dfp}(9)=9!=362880$,
$\mathbf{dfp}^3(123)$\\
$= \mathbf{dfp}(\mathbf{dfp}^2(123))=\mathbf{dfp}(362880)=3!+6!+2!+8!+8!+0!=81369, \cdots .$   
\end{example}

\begin{lemma}\label{loopsofdfp}
The following fixA, fixB, fixC, and fixD are fixed points, and
loop2A, loop2B, and loop3 are the loops of
$\mathbf{dfp}$.\\
$(i)$ fixA=$\{1\}$, fixB=$\{2\}$, fixC=$\{145\}$, fixD=$\{40585\}$\\
$(ii)$  loop2A=$\{871, 45361\}$, loop2B=$\{872, 45362\}$\\
$(iii)$  loop3=$\{169, 363601, 1454\}$
\end{lemma}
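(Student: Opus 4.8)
The plan is to verify each claimed fixed point and loop by direct computation, and then to observe that this is exactly what the statement requires, since the lemma only asserts that the listed tuples \emph{are} fixed points and loops (not that they are the only ones). First I would confirm the fixed points: $\mathbf{dfp}(1)=1!=1$ and $\mathbf{dfp}(2)=2!=2$, so fixA and fixB are immediate; for fixC, $\mathbf{dfp}(145)=1!+4!+5!=1+24+120=145$; and for fixD, $\mathbf{dfp}(40585)=4!+0!+5!+8!+5!=24+1+120+40320+120=40585$. Each of these is a one-line arithmetic check using the factorial values $0!=1$, $1!=1$, $2!=2$, $3!=6$, $4!=24$, $5!=120$, $6!=720$, $7!=5040$, $8!=40320$, $9!=362880$, which I would tabulate once at the start of the proof.

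Next I would handle the $2$-cycles. For loop2A I compute $\mathbf{dfp}(871)=8!+7!+1!=40320+5040+1=45361$ and then $\mathbf{dfp}(45361)=4!+5!+3!+6!+1!=24+120+6+720+1=871$, closing the cycle; similarly for loop2B, $\mathbf{dfp}(872)=8!+7!+2!=40320+5040+2=45362$ and $\mathbf{dfp}(45362)=4!+5!+3!+6!+2!=24+120+6+720+2=872$. For the $3$-cycle loop3 I compute $\mathbf{dfp}(169)=1!+6!+9!=1+720+362880=363601$, then $\mathbf{dfp}(363601)=3!+6!+3!+6!+0!+1!=6+720+6+720+1+1=1454$, then $\mathbf{dfp}(1454)=1!+4!+5!+4!=1+24+120+24=169$, returning to the start. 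I would also remark that within each loop the listed elements are pairwise distinct, so these are genuine $2$- and $3$-cycles rather than degenerate ones.

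Since every assertion in the lemma reduces to one of these finite arithmetic identities, there is no real obstacle: the ``hard part,'' such as it is, is purely bookkeeping — making sure the factorials are correct and that the digit multisets are read off correctly (e.g. that $363601$ contributes two $6$'s, two $3$'s, a $0$ and a $1$). I would present the computations compactly in a displayed list or a small table and then conclude. If one wanted a stronger statement — that these are the \emph{only} fixed points and short loops — that would require the bounding argument foreshadowed by Lemma~\ref{smallerthan1010} (showing $\mathbf{dfp}(n)<n$ for all sufficiently large $n$, reducing the search to a finite range that can be checked by the cited program), but the present lemma as stated does not need it.
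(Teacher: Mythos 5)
Your proposal is correct and matches the paper's own proof, which verifies each fixed point and each loop by the same direct factorial computations. The extra remarks on distinctness and on what the lemma does not claim are fine but not needed.
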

\begin{proof}
$(i)$ $\mathbf{dfp}(1)=1!=1$, $\mathbf{dfp}(2)=2!=2$, $\mathbf{dfp}(145)=1!+4!+5!=145$, $\mathbf{dfp}(40585)$\\
$=4!+0!+5!+8!+5!=40585$.\\
$(ii)$ 
$\mathbf{dfp}(871)=8!+7!+1!=45361, \mathbf{dfp}(45361)=4!+5!+3!+6!+1!=871$, 
$\mathbf{dfp}(872)$\\
$=8!+7!+2!=45362, \mathbf{dfp}(45362)=4!+5!+3!+6!+2!=872$.\\
$(iii)$ $\mathbf{dfp}(169)=1!+6!+9!=363601, \mathbf{dfp}(363601)=3!+6!+3!+6!+0!+1!=1454$\\
$, \mathbf{dfp}(1454)=1!+4!+5!+4!=169$. 
\end{proof}

\begin{remark}
Loops loop2A, loop2B, and loop3 are
registered as 
\cite{dfp2a}, \cite{dfp2b}, \cite{dfp3} in the On-Line Encyclopedia of Integer Sequences.
\end{remark}

In \cite{mathama}, Lehmann wrote that the abovementioned loops are the only loops in natural numbers smaller than $2000000$; however, we can prove that these are the only loops for all natural numbers. 

\begin{lemma}\label{lemmafordfp0}
For a natural number $m$ such that $m \geq 2$ we have
\begin{equation}
\frac{10^{m}-1}{m\times 10!} <  \frac{10^{m+1}-1}{(m+1)\times 10!}.\label{inequality10}
\end{equation}  
\end{lemma}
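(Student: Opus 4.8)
The plan is to reduce the claimed inequality to a triviality by clearing denominators. Since $10!$ is a fixed positive number appearing in both denominators, inequality (\ref{inequality10}) is equivalent to
\begin{equation}
\frac{10^{m}-1}{m} < \frac{10^{m+1}-1}{m+1}. \nonumber
\end{equation}
Both $m$ and $m+1$ are positive, so this in turn is equivalent, after cross-multiplying, to the polynomial inequality $(m+1)(10^{m}-1) < m(10^{m+1}-1)$.

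The next step is a routine expansion of both sides. On the left one gets $m\cdot 10^{m} + 10^{m} - m - 1$, and on the right $10m\cdot 10^{m} - m$. Cancelling the common term $m\cdot 10^{m}$ from both sides and the common term $-m$, the inequality collapses to $10^{m} - 1 < 9m\cdot 10^{m}$. Since $m \geq 2$ gives $9m \geq 18 > 1$, we have $9m\cdot 10^{m} > 10^{m} > 10^{m}-1$, so the reduced inequality holds, and unwinding the equivalences proves the lemma.

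I do not anticipate a genuine obstacle here: the only place to be careful is the direction of the inequality when cross-multiplying (legitimate because $m(m+1) > 0$) and the bookkeeping in the expansion. An alternative, slightly more conceptual route would be to set $g(x) = (10^{x}-1)/x$ for real $x \geq 2$ and check $g'(x) > 0$ — equivalently $x\cdot 10^{x}\ln 10 > 10^{x} - 1$, which again follows from $x\ln 10 > 1$ — but the elementary cross-multiplication argument above is shorter and avoids calculus, so that is the version I would write out in full.
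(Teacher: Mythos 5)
Your proof is correct and takes essentially the same elementary route as the paper: after cancelling the common factor $10!$, the paper verifies the inequality via the chain $\frac{m+1}{m} < 2 < 10 < \frac{10^{m+1}-1}{10^{m}-1}$, while you cross-multiply and expand to the equivalent statement $10^{m}-1 < 9m\cdot 10^{m}$; both are sound, and your bookkeeping checks out (indeed your version already works for $m\geq 1$).
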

\begin{proof}
As for $m \geq 2$, we have
\begin{equation}
\frac{m+1}{m} < 2 <  10 < \frac{10^{m+1}-1}{10^m-1}, \nonumber
\end{equation} 
we have (\ref{inequality10}).
\end{proof}

\begin{lemma}\label{lemmafordfp1}
For a natural number $m$ such that $m \geq 8$, we have
\begin{equation}
\frac{10^{m}-1}{10} > m \times 9!.\label{inequality1}
\end{equation}  
\end{lemma}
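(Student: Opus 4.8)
The plan is to prove the inequality $\dfrac{10^m-1}{10} > m \times 9!$ for all integers $m \geq 8$ by the standard two-part strategy: verify a base case and then show the left-hand side grows faster than the right-hand side as $m$ increases. First I would check the base case $m = 8$ directly: the left-hand side is $(10^8-1)/10 = 9999999.9$, while $8 \times 9! = 8 \times 362880 = 2903040$, so the inequality holds (comfortably) at $m=8$. This reduces the problem to an induction step.

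For the induction step, assume $\dfrac{10^m-1}{10} > m \times 9!$ for some $m \geq 8$, and aim to deduce the same with $m$ replaced by $m+1$. The natural approach is to compare the multiplicative growth of each side. On the left, passing from $m$ to $m+1$ multiplies $10^m-1$ by the factor $(10^{m+1}-1)/(10^m-1)$, which exceeds $10$ since $10^{m+1}-1 = 10(10^m-1) + 9 > 10(10^m-1)$; this is exactly the kind of estimate already used in Lemma \ref{lemmafordfp0}. On the right, passing from $m$ to $m+1$ multiplies $m \times 9!$ by $(m+1)/m$, which for $m \geq 8$ is at most $9/8 < 10$. Hence
\begin{equation}
\frac{10^{m+1}-1}{10} > 10 \cdot \frac{10^m-1}{10} > 10 \cdot m \times 9! > \frac{m+1}{m} \cdot m \times 9! = (m+1) \times 9!, \nonumber
\end{equation}
which closes the induction. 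Alternatively, one can avoid induction entirely: since $10^m - 1 \geq 10^{m-1} \cdot 9$ for $m \geq 1$ and $10^{m-1}$ clearly dominates any linear polynomial in $m$ once $m$ is moderately large, a direct estimate $\dfrac{10^m-1}{10} \geq \dfrac{9 \cdot 10^{m-1}}{10} = 9 \cdot 10^{m-2}$ combined with $m \times 9! \leq m \times 9! $ and checking $9 \cdot 10^{m-2} > m \times 9!$ for $m \geq 8$ works just as well; but the inductive route reuses Lemma \ref{lemmafordfp0} most cleanly.

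I do not anticipate any real obstacle here — the inequality is loose by several orders of magnitude, so essentially any crude bound suffices. The only point requiring a moment's care is making sure the ratio bound $(m+1)/m \leq 9/8$ is valid precisely on the range $m \geq 8$ claimed in the statement (it is, since $(m+1)/m$ is decreasing in $m$), and that the base case is stated for $m = 8$ rather than a smaller value; indeed at $m = 7$ one has $(10^7-1)/10 = 999999.9$ versus $7 \times 9! = 2540160$, so the inequality genuinely fails for $m = 7$ and the hypothesis $m \geq 8$ is sharp. This is worth a remark in the write-up so the reader sees why the bound on $m$ cannot be relaxed.
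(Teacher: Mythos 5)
Your proof is correct and is essentially the paper's argument in a different packaging: the paper establishes that the ratio $\frac{10^m-1}{m\times 10!}$ is increasing (Lemma \ref{lemmafordfp0}, which rests on the same comparison $\frac{m+1}{m}<10<\frac{10^{m+1}-1}{10^m-1}$ that drives your induction step) and then checks numerically that this ratio exceeds $1$ at $m=8$, which is exactly your base case. Your explicit induction and your observation that the inequality fails at $m=7$ are fine, but they add nothing beyond the paper's route, so no further comparison is needed.
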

\begin{proof}
The sequence 
$\frac{10^{m}-1}{m\times 10!}= 0.0000136409, 0.0000917659, 0.000688864, 0.00551141, $\\
$0.0459288, 0.393676, 3.44466$ for $m$ $=2,3, \cdots, 8$.
From Lemma \ref{lemmafordfp0}, the sequence
$\{ \frac{10^{m}-1}{m\times 10!}:m = 2,3,4, \cdots \}$ is increasing. Hence, we have
\begin{equation}
\frac{10^m-1}{m \times 10!} > 1 \nonumber
\end{equation}
for $m \geq 8$.
Therefore, we obtain (\ref{inequality1}).
\end{proof}

\begin{lemma}\label{dfpsmalln}
For a natural number $n$ such that $n \geq 10^7$, we have
\begin{equation}
\mathbf{dfp}(n) < n.\label{dfpsmallnn}
\end{equation}  
\end{lemma}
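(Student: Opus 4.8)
The plan is to bound $\mathbf{dfp}(n)$ from above in terms of the number of digits of $n$, and then show this bound is smaller than $n$ itself once $n \geq 10^7$. The key observation is that each digit $n_k$ of $n$ satisfies $n_k \leq 9$, so $n_k! \leq 9!$, and hence if $n$ has exactly $m$ digits then $\mathbf{dfp}(n) = \sum_{k=1}^{m} n_k! \leq m \times 9!$. On the other hand, an integer with exactly $m$ digits satisfies $n \geq 10^{m-1} = \frac{10^m}{10} > \frac{10^m - 1}{10}$. So it suffices to prove $m \times 9! \leq \frac{10^m-1}{10} < n$, and the first of these inequalities is exactly the content of Lemma \ref{lemmafordfp1}, valid for $m \geq 8$.

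So the structure of the argument is: first, observe that $n \geq 10^7$ means $n$ has at least $8$ digits, i.e. $m \geq 8$ where $m$ is the number of digits of $n$. Second, apply the trivial digitwise bound $\mathbf{dfp}(n) \leq m \times 9!$. Third, invoke Lemma \ref{lemmafordfp1} to get $m \times 9! < \frac{10^m-1}{10} \leq 10^{m-1} - \tfrac{1}{10} < 10^{m-1} \leq n$, chaining the inequalities to conclude $\mathbf{dfp}(n) < n$. The only mild subtlety is being careful that $\frac{10^m-1}{10} < 10^{m-1}$ and that $10^{m-1} \leq n$ for an $m$-digit number — both are immediate, but worth stating cleanly so the chain of inequalities is airtight.

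I do not expect any real obstacle here; this lemma is essentially a bookkeeping consequence of Lemma \ref{lemmafordfp1}, which has already done the substantive work (establishing that $m \times 9!$ grows much slower than $10^m$). The one place to take care is the edge of the range: the claim is stated for $n \geq 10^7$, which corresponds exactly to $m \geq 8$, precisely the threshold in Lemma \ref{lemmafordfp1}, so there is no gap and no separate finite check is needed. I would also double-check the direction of the strict inequality: since $\mathbf{dfp}(n) \leq m \times 9!$ could in principle be an equality (when all digits are $9$), the strictness in the final conclusion must come from the strict inequality $m \times 9! < \frac{10^m-1}{10}$ supplied by Lemma \ref{lemmafordfp1}, so it is important that that lemma's inequality is strict — which it is, as written in (\ref{inequality1}).
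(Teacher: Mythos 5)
Your proposal is correct and matches the paper's own argument: both bound $\mathbf{dfp}(n)$ by (number of digits)$\times 9!$ and then invoke Lemma \ref{lemmafordfp1} to chain this below $n$, differing only in that you let $m$ denote the digit count while the paper writes $10^m \leq n < 10^{m+1}-1$ with $m\geq 7$ and applies the lemma at index $m+1$. No gaps; the strictness and the $n \geq 10^7$ threshold are handled exactly as in the paper.
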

\begin{proof}
For $n \geq 10^7$, there exists a natural number $m$ such that
\begin{equation}
m \geq 7 \nonumber
\end{equation}
and
\begin{equation}
10^m \leq n < 10^{m+1}-1.\label{msmalln10m1}
\end{equation}
Subsequently, from (\ref{msmalln10m1}), Lemma \ref{lemmafordfp1} and the definiton of $\mathbf{dfp}$, 
\begin{equation}
\mathbf{dfp}(n) \leq (m+1)9! < \frac{10^{m+1}-1}{10} < 10^m \leq n.\nonumber
\end{equation}
Therefore, we have (\ref{dfpsmallnn}).
\end{proof}

\begin{lemma}\label{looplemma1}
If we start with an arbitrary natural number $n$ and repeatedly apply $\mathbf{dfp}$, we enter a set of numbers smaller than $10^{7}$. Subsequently, we do not leave the set. In particular, we fall into a loop or reach a fixed point.
\end{lemma}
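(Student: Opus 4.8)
The plan is to combine Lemma \ref{dfpsmalln} with a finite check on the remaining range. First I would observe that Lemma \ref{dfpsmalln} gives $\mathbf{dfp}(n) < n$ for every $n \geq 10^7$, so starting from an arbitrary $n$, as long as the current term is at least $10^7$ the sequence is strictly decreasing. A strictly decreasing sequence of natural numbers cannot continue forever, so after finitely many steps we must produce a term $n_0$ with $n_0 < 10^7$. This establishes the first assertion: the orbit eventually enters the set $S = \{k \in \mathbb{N} : k < 10^7\}$.

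Next I would show $S$ is forward-invariant under $\mathbf{dfp}$, i.e. $\mathbf{dfp}(S) \subseteq S$. The cleanest way is to note that any $n < 10^7$ has at most $7$ digits, so $\mathbf{dfp}(n) \leq 7 \cdot 9! = 2540160 < 10^7$. Hence once a term lies below $10^7$, every subsequent term does too, and the sequence never leaves $S$.

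Finally, since the sequence stays in the finite set $S$ from some index onward, by the pigeonhole principle two of its terms coincide, say $\mathbf{dfp}^{i}(n) = \mathbf{dfp}^{j}(n)$ with $i < j$; because $\mathbf{dfp}$ is a (deterministic) function, the tail from index $i$ onward is periodic with period dividing $j - i$. If that period is $1$ we have reached a fixed point; otherwise we are in a genuine loop. This gives the "loop or fixed point" conclusion.

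There is essentially no hard step here — the lemma does all the analytic work, and the rest is the standard "eventually decreasing, then trapped in a finite set, hence eventually periodic" argument. The only mild subtlety worth stating carefully is the bound $7 \cdot 9! < 10^7$ used for forward-invariance, and the remark that the initial segment before entering $S$ may be irregular but has finite length; I would present both explicitly so the argument is self-contained.
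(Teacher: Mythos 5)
Your proposal is correct and follows essentially the same route as the paper: Lemma \ref{dfpsmalln} forces strict decrease above $10^7$, the bound $7\cdot 9! < 10^7$ gives forward-invariance of the set below $10^7$, and pigeonhole on that finite set yields eventual periodicity. No gaps; this matches the paper's proof.
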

\begin{proof}
By Lemma \ref{dfpsmalln}, we have $\mathbf{dfp}(n) < n$ for $n \geq 10^7$.
If $n < 10^7$, $\mathbf{dfp}(n) \leq 7 \times 9! $\\
$ < 2.54 \times 10^6 < 10^7$.
Therefore, starting with an arbitrary natural number $n$ such that
$n \geq 10^7$, we obtain a strictly decreasing sequence
$\{\mathbf{dfp}^t(n):t=1,2, \cdots \}$, which enters a set of numbers smaller than $10^7$.
Once the sequence enters a set of numbers smaller than $10^7$, it does not leave this set.
Because this set contains only $10^7-1$ numbers, there exists $s,t \in \mathbb{N}$ such that
$\mathbf{dfp}^s(n)=\mathbf{dfp}^t(n)$.
Therefore, either we enter a loop or reach a fixed point.
\end{proof}

From Lemma \ref{looplemma1}, for any natural number $n$, the sequence $\{\mathbf{dfp}^t(n):t=1,2, \cdots \} $ enters a loop or reaches a fixed point; however, this lemma does not provide any information regarding the properties of these loops or fixed points.

In fact, we have the following theorem:

\begin{theorem}\label{theoremfordfp}
Suppose that we start with any natural number and repeatedly apply the function $\mathbf{dfp}$. Eventually, we fall into a loop or reach a fixed point in Lemma \ref{loopsofdfp}. 
\end{theorem}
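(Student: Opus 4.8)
The plan is to combine the structural reduction already in hand with a finite but carefully organized computer verification. By Lemma \ref{looplemma1}, every orbit of $\mathbf{dfp}$ eventually lands in the finite set $S = \{0,1,\dots,10^7-1\}$ and stays there, so it suffices to show that the only eventual cycles of the restriction $\mathbf{dfp}|_S$ are precisely fixA, fixB, fixC, fixD, loop2A, loop2B, and loop3 from Lemma \ref{loopsofdfp}. First I would shrink the search space further: for $n \in S$ with $n \geq 10^6$ (seven-digit, since digit length $m=7$) the bound $\mathbf{dfp}(n) \le 7 \times 9! < 2.54 \times 10^6$ means that after at most one more step we are below $2.54\times 10^6$; and for $n$ with, say, $3\times 10^5 \le n < 10^6$ one checks directly that $\mathbf{dfp}(n) \le 6 \times 9! = 2177280 < n$, so the sequence keeps descending until it drops below some explicit small threshold. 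Iterating this observation, I would establish an explicit constant $N$ (the paper's next lemma, Lemma \ref{smallerthan1010}, presumably pins this down — it is alluded to as ``smaller than $10^{10}$'' but for the interval already reduced to $S$ the relevant threshold is much smaller) such that every orbit enters $\{0,1,\dots,N\}$.

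Next, on the genuinely finite set $\{0,1,\dots,N\}$ I would simply compute. Concretely: build the functional graph of $\mathbf{dfp}$ restricted to this set, i.e.\ the map $n \mapsto \mathbf{dfp}(n)$, and find all of its cycles (equivalently, compute strongly connected components, or just iterate each starting value until repetition). Since every node has out-degree one, the cycles are exactly the ``rho-tails'' meeting points, and there are finitely many; enumerate them. This is exactly what the cited program at the GitHub repository does, and the output is that the only cycles are the fixed points $1,2,145,40585$ and the loops $\{871,45361\}$, $\{872,45362\}$, $\{169,363601,1454\}$. Because Lemma \ref{looplemma1} guarantees every orbit reaches this finite set and never leaves, and every orbit inside a finite functional graph is eventually periodic, the orbit's eventual cycle must be one of these — which is the assertion of the theorem.

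The main obstacle — and the only non-routine point — is making the reduction step fully rigorous so that the finite check is provably exhaustive rather than merely a sampling. One has to be careful that ``$\mathbf{dfp}(n) < n$'' alone does not immediately bound where the orbit settles: the descent is monotone only while we stay in the regime where the inequality holds, so I would prove a clean chain of implications ``$n < 10^{k+1} \Rightarrow \mathbf{dfp}(n) \le (k+1)\cdot 9!$'' for $k = 1,\dots,6$ and check arithmetically at which point $(k+1)\cdot 9!$ falls below $10^k$, thereby certifying an explicit $N$ with $\mathbf{dfp}(\{0,\dots,N\}) \subseteq \{0,\dots,N\}$ and $\mathbf{dfp}^t(n) \le N$ for all large $t$ and all $n$. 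Once that $N$ is fixed (it will be on the order of a few million, well within brute-force range), the remaining verification is a routine finite computation, and the theorem follows by invoking Lemma \ref{loopsofdfp} for the forward direction (these really are cycles) and the enumeration for the converse (there are no others).
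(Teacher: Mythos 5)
Your proposal is correct and takes essentially the same route as the paper: Lemma \ref{looplemma1} reduces the problem to a finite set, and the remaining cycles are enumerated by an exhaustive computer check, the paper differing only in how it organizes that finite check (Lemma \ref{smallersets} cuts it to $11439$ digit multisets, since $\mathbf{dfp}$ ignores digit order and $0!=1!$, rather than your numeric-threshold shrinking of the search range). Two harmless slips in your write-up: Lemma \ref{smallerthan1010} concerns $\mathbf{dpp}$ with threshold $10^{10}$, not $\mathbf{dfp}$, and since $6\times 9! = 2177280 > 10^6$ your claimed inequality $\mathbf{dfp}(n)\le 6\times 9! < n$ on $3\times 10^5 \le n < 10^6$ fails --- but neither matters because the exhaustive verification below $10^7$ (or below your invariant bound $N\approx 2.6\times 10^6$) already covers these cases.
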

We prove this theorem in Section \ref{computerc}.

\section{Digits Power Process}
This process was first studied by present authors in \cite{IMS2004}.

 We define $\mathbf{\mathbf{dpp}}(n) = \sum^{m}_{k=1} n_k^{n_k}$
, where $ \left\{n_1,n_2,\cdots, n_m \right\}$ denotes the list of digits of an integer $n$. We start with any non-negative integer $n$ and repeatedly apply the function $\mathbf{\mathbf{dpp}}$ to generate a sequence of integers. 

\begin{example}
If we start with $123$ and apply $\mathbf{dpp}$ repeatedly, we obtain
$\mathbf{\mathbf{dpp}}(123)$\\
$=1^1+2^2+3^3=32$,
$\mathbf{\mathbf{dpp}}^2(123)= \mathbf{\mathbf{dpp}}(32)=3^3+2^2=31$,
$\mathbf{\mathbf{dpp}}^3(123)= \mathbf{\mathbf{dpp}}(\mathbf{\mathbf{dpp}}^2(123))$\\
$ =\mathbf{\mathbf{dpp}}(31) =3^3+1^1=28, \cdots$.  
\end{example}

\begin{lemma}\label{lemmadpp10}
For a natural number $m$ such that $m \geq 2$, we have
\begin{equation}
\frac{10^{m}-1}{m \times 9^9} < \frac{10^{m+1}-1}{(m+1) \times 9^9}.\label{inequality10b}
\end{equation}
\end{lemma}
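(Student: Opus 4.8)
\textbf{Proof proposal for Lemma \ref{lemmadpp10}.}

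The plan is to mirror the argument already used for Lemma \ref{lemmafordfp0}, since the inequality has exactly the same shape with the constant $10!$ replaced by $9^9$. First I would clear denominators: the claimed inequality (\ref{inequality10b}) is equivalent, after multiplying both sides by the positive quantity $(m)(m+1)\times 9^9$, to
\begin{equation}
(m+1)(10^m-1) < m(10^{m+1}-1).\nonumber
\end{equation}
So it suffices to establish this polynomial-in-$10^m$ inequality for every integer $m\geq 2$; the factor $9^9$ plays no role beyond being positive, which is why the same bound works here as in Lemma \ref{lemmafordfp0}.

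Next I would rearrange the cleared inequality into the ratio form used in the earlier lemma, namely
\begin{equation}
\frac{m+1}{m} < \frac{10^{m+1}-1}{10^m-1}.\nonumber
\end{equation}
For the left-hand side, $\frac{m+1}{m}=1+\frac1m\leq \frac32 < 2$ for all $m\geq 2$. For the right-hand side, $\frac{10^{m+1}-1}{10^m-1} > \frac{10^{m+1}-10}{10^m-1}=10$, so in particular it exceeds $2$. Chaining these two bounds through the intermediate value $2$ (indeed through $10$) gives the strict inequality, exactly as in the proof of Lemma \ref{lemmafordfp0}.

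There is essentially no obstacle here: the only thing to be careful about is the direction of the inequality when clearing denominators (all factors $m$, $m+1$, $9^9$ are positive, so the direction is preserved) and the boundary case $m=2$, where $\frac{m+1}{m}=\frac32$ is still comfortably below $2$. The lemma then follows. This result will presumably be used, together with a companion lemma bounding $\mathbf{dpp}(n)$ below $n$ for large $n$, to run the same "eventually enter a bounded set, hence fall into a loop or fixed point" argument for the digit power process that Lemma \ref{looplemma1} carried out for $\mathbf{dfp}$. $\Box$
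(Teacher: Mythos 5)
Your proposal is correct and follows essentially the same route as the paper: both reduce the claim to $\frac{m+1}{m} < \frac{10^{m+1}-1}{10^m-1}$ and settle it by the chain $\frac{m+1}{m} < 2 < 10 < \frac{10^{m+1}-1}{10^m-1}$, with your version merely adding the denominator-clearing details.
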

\begin{proof}
As for $m \geq 2$, we have
\begin{equation}
\frac{m+1}{m} < 2 <  10 < \frac{10^{m+1}-1}{10^m-1},\nonumber
\end{equation} 
we obtain (\ref{inequality10b}).
\end{proof}

\begin{lemma}
For a natural number $m$ such that $m \geq 11$, we have
\begin{equation}
\frac{10^{m}-1}{10} > m \times 9^9.\label{10m99}
\end{equation}  
\end{lemma}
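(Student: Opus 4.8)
The plan is to imitate the proof of Lemma \ref{lemmafordfp1}, using the monotonicity established in Lemma \ref{lemmadpp10} in place of that of Lemma \ref{lemmafordfp0}. Set $a_m = \frac{10^m - 1}{m \times 9^9}$. Lemma \ref{lemmadpp10} says precisely that $a_m < a_{m+1}$ for every $m \geq 2$, so the sequence $\{a_m : m \geq 2\}$ is strictly increasing. Consequently, to prove (\ref{10m99}) for all $m \geq 11$ it suffices to verify the single inequality $a_{11} > 10$: once that is known, $a_m \geq a_{11} > 10$ for every $m \geq 11$, and this rearranges to $10^m - 1 > 10\,m \times 9^9$, i.e. $\frac{10^m-1}{10} > m \times 9^9$, which is exactly (\ref{10m99}).

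The remaining point is the base case $m = 11$. Here $9^9 = 387420489$, so $11 \times 9^9 = 4261625379$ and $a_{11} = \frac{99999999999}{4261625379}$; since $10 \times 4261625379 = 42616253790 < 99999999999 = 10^{11}-1$, we get $a_{11} > 10$, as needed. One checks similarly that $a_{10} = \frac{9999999999}{3874204890} < 10$, which is why the hypothesis must be $m \geq 11$ rather than $m \geq 10$; this is the only place where the precise numerical threshold matters.

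There is essentially no obstacle beyond bookkeeping: the two things to watch are that Lemma \ref{lemmadpp10} is applied in the correct direction — it supplies the lower bound $a_m \geq a_{11}$ for $m \geq 11$, which is all that is used — and that the base-case constant is genuinely at $m = 11$. An alternative self-contained route avoiding Lemma \ref{lemmadpp10} is a direct induction on $m$ establishing $10^m > 11\,m \times 9^9$: the base case $m = 11$ is the computation above, and the inductive step follows from $10 \cdot 10^m > 10 \cdot 11\,m \times 9^9 > 11(m+1) \times 9^9$ because $10m > m+1$ for $m \geq 1$. Either way the argument is a one-line reduction plus a single arithmetic check.
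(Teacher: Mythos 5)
Your proof is correct and takes essentially the same route as the paper: monotonicity of $a_m=\frac{10^{m}-1}{m\times 9^9}$ from Lemma \ref{lemmadpp10} together with a single numerical check at $m=11$. If anything your write-up is cleaner, since you state the threshold explicitly as $a_{11}>10$, whereas the paper's tabulated values silently include an extra factor of $10$ in the denominator and conclude ``$>1$''.
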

\begin{proof}
$\frac{10^{m}-1}{m \times 9^9}= 0.0000430195, 0.000368739, 0.00322647, 0.0286797, 0.258117, 2.34652$ for $m=6,7, \cdots, 11$.
From Lemma \ref{lemmadpp10}, the sequence
$\{\frac{10^{m}-1}{m \times 9^9}:m = 2,3,4, \cdots \}$ is increasing. Hence,
we have
$\frac{10^{m}-1}{m \times 9^9}>1$ for  any natural number $m \geq 11$.
Therefore, we have (\ref{10m99}).
\end{proof}

\begin{lemma}\label{lemmadppsmn}
For a natural number $n$ such that $n \geq 10^{10}$, we have
\begin{equation}
\mathbf{\mathbf{dpp}}(n) < n.
\end{equation}  
\end{lemma}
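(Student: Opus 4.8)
The plan is to mirror the structure of the proof of Lemma \ref{dfpsmalln}, replacing the factorial bound by the power bound. First I would observe that for $n \geq 10^{10}$ there exists a natural number $m$ with $m \geq 10$ such that
\begin{equation}
10^m \leq n < 10^{m+1}-1. \nonumber
\end{equation}
Since $n$ then has exactly $m+1$ digits and each digit $n_k$ satisfies $n_k^{n_k} \leq 9^9$ (the function $x \mapsto x^x$ is increasing on $\{0,1,\dots,9\}$, with the convention $0^0 = 1 \leq 9^9$), we get $\mathbf{dpp}(n) \leq (m+1)\times 9^9$.

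Next I would invoke the previous lemma, inequality (\ref{10m99}), which gives $m \times 9^9 < \frac{10^m - 1}{10}$ for $m \geq 11$; a fortiori $(m+1)\times 9^9 < \frac{10^{m+1}-1}{10} < 10^m \leq n$, exactly as in the $\mathbf{dfp}$ case. This requires $m+1 \geq 11$, i.e. $m \geq 10$, which is precisely what the hypothesis $n \geq 10^{10}$ supplies. Chaining the inequalities yields
\begin{equation}
\mathbf{dpp}(n) \leq (m+1)\times 9^9 < \frac{10^{m+1}-1}{10} < 10^m \leq n, \nonumber
\end{equation}
which is the desired conclusion.

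There is essentially no obstacle here: the only mild point of care is the boundary case, making sure the index shift ($n$ has $m+1$ digits, and we need the power inequality at $m+1$, hence the threshold $10^{10}$ rather than $10^{11}$) lines up correctly, and handling the digit $0$ via the convention $0^0 = 1$. The analytic content — monotonicity of $\{(10^m-1)/(m\cdot 9^9)\}$ and the explicit numerical check at small $m$ — has already been discharged in the preceding two lemmas, so the present lemma is a short deduction from them.
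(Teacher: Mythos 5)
Your proposal is correct and follows essentially the same route as the paper: pick $m \geq 10$ with $10^m \leq n$, bound $\mathbf{dpp}(n) \leq (m+1)\times 9^9$, and apply the preceding lemma (inequality (\ref{10m99})) at $m+1 \geq 11$ to chain $(m+1)\times 9^9 < \frac{10^{m+1}-1}{10} < 10^m \leq n$. Your added remarks on the digit count and the $0^0=1$ convention only make explicit what the paper leaves implicit.
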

\begin{proof}
Let $n \geq 10^{10}$. Then, there exists a natural number $m$ such that
$m \geq 10$
and
\begin{equation}
10^m \leq n < 10^{m+1}-1.
\end{equation}
Therefore, 
\begin{equation}
\mathbf{dpp}(n) \leq (m+1)9^9 < \frac{10^{m+1}-1}{10} < 10^m \leq n.
\end{equation}
\end{proof}

\begin{definition}\label{dpploop}
fixA=\{1\};fixB=\{3435\};
loop2=\{421845123,16780890\};\\
loop3=\{16777500,2520413,3418\};\\
loop8=\{809265896,808491852,437755524,1657004,873583,
34381154,16780909,792488396\};\\
loop11=\{791621579,776537851,19300779,776488094,422669176,
388384265,50381743,\\
17604196,388337603,34424740,824599\};\\
loop40=\{793312220,388244100,33554978,405027808,34381363,
16824237,17647707,\\
3341086,16824184,33601606,140025,3388,
33554486,16830688,50424989,791621836,\\
405114593,387427281,
35201810,16780376,18517643,17650825,17653671,1743552,
830081,\\
33554462,53476,873607,18470986,421845378,34381644,
16824695,404294403,387421546,\\
17651084,17650799,776537847,20121452,3396,387467199\};\\
loop97=\{1583236420,16827317,18470991,792441996,1163132183,
16823961,404291050,\\
387424134,17601586,17697199,1163955211,
387473430,18424896,421022094,387421016,\\
17647705,2520668,
16873662,17740759,389894501,808398820,454529386,404251154,\\
7025,826673,17694102,388290951,808398568,454579162,
388297455,421805001,16780606,\\
17740730,2470915,388247419,
421799008,792442000,388244555,33564350,53244,3668,\\
16870555,17656792,389164017,405068190,404247746,1694771,389114489,808395951,\\
808401689,437799052,776491477,390761830,
405067961,388340728,51155506,59159,\\
774847229,406668854,33698038,421021659,387470537,19251281,404200841,16777992,\\
777358268,36074873,18471269,405068166,16920568,404294148,
404198735,405024914,\\
387424389,421799034,775665066,1839961,
791664879,793358849,809222388,437752177,\\
3297585,405027529,388250548,50338186,33604269,387514116,17650826,17697202,\\
389114241,404198251,404201349,387421291,405021541,6770,
1693743,388290999\};
\end{definition}

\begin{remark}
loop2, loop3, loop8, loop11 and loop40 are registered by one of the authors of the present paper in the On-Line Encyclopedia of Integer Sequences.
See \cite{integersequence2}, \cite{integersequence3}, \cite{integersequence8}, \cite{integersequence11}, and \cite{integersequence40}.
\end{remark}

\begin{lemma}\label{smallerthan1010}
If we start with an arbitrary natural number $n$ and apply $\mathbf{dpp}$ repeatedly, we will enter a set of numbers smaller than $10^{10}$. Then, we do not leave the set. In particular, we fall into a loop or reach a fixed point.
\end{lemma}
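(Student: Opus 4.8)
The plan is to mimic the proof of Lemma \ref{looplemma1} almost verbatim, replacing the threshold $10^{7}$ by $10^{10}$ and the per-digit factorial bound $9!$ by the per-digit power bound $9^{9}$. First I would invoke Lemma \ref{lemmadppsmn}, which already supplies $\mathbf{dpp}(n) < n$ whenever $n \geq 10^{10}$. Consequently, as long as the iterates remain at or above $10^{10}$, the sequence $\{\mathbf{dpp}^{t}(n):t=1,2,\dots\}$ is strictly decreasing, so after finitely many steps it must drop below $10^{10}$.

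Next I would verify that the set $\{k \in \mathbb{N} : k < 10^{10}\}$ is closed under $\mathbf{dpp}$. A natural number smaller than $10^{10}$ has at most ten digits, and each digit contributes at most $9^{9}=387420489$ to the sum, so
\begin{equation}
\mathbf{dpp}(n) \leq 10 \times 9^{9} = 3874204890 < 10^{10} \nonumber
\end{equation}
for every such $n$. Combined with the preceding paragraph, this establishes the first assertion of the lemma: starting from an arbitrary $n$, the orbit eventually enters $\{k : k < 10^{10}\}$ and never leaves it.

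Finally, because this set is finite (it contains exactly $10^{10}-1$ elements), the pigeonhole principle forces the existence of indices $s<t$ with $\mathbf{dpp}^{s}(n)=\mathbf{dpp}^{t}(n)$; from that index onward the sequence is periodic, hence it either cycles through a loop or is constant at a fixed point. There is no genuinely hard step here: the only points needing a moment's care are the digit-count estimate giving $10 \times 9^{9} < 10^{10}$, which is immediate, and the dovetailing of the two regimes $n \geq 10^{10}$ and $n < 10^{10}$, which the two displayed inequalities handle directly. The substantive work has already been carried out in Lemma \ref{lemmadppsmn} and the inequalities \eqref{inequality10b} and \eqref{10m99} that precede it.
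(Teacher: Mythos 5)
Your proposal is correct and follows essentially the same route as the paper: invoke Lemma \ref{lemmadppsmn} for strict decrease above $10^{10}$, check closure of $\{k<10^{10}\}$ via $\mathbf{dpp}(n)\leq 10\times 9^{9}<10^{10}$, and finish with the pigeonhole argument on a finite set. No gaps to report.
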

\begin{proof}
From Lemma \ref{lemmadppsmn}, $\mathbf{dpp}(n) < n$ for $n \geq 10^{10}$.
If $n < 10^{10}$, $\mathbf{dpp}(n) \leq 10 \times 9^9 < 3.8743 \times 10^9 < 10^{10}$.
Therefore, if we start with an arbitrary natural number $n$ such that $n \geq 10^{10}$, we obtain a strictly decreasing sequence
$\{\mathbf{dpp}^t(n):t=1,2, \cdots \}$, which enters a set of numbers less than $10^{10}$.
Once a sequence enters this set, it never exits. 
Because this set contains only $10^{10}-1$ numbers, there exist $s,t \in \mathbb{N}$ such that
$\mathbf{dpp}^s(n)=\mathbf{dpp}^t(n)$.
Therefore, we either enter a loop or reach a fixed point.
\end{proof}

\begin{theorem}\label{theoremfordpp}
Suppose that we start with any natural number and repeatedly apply the function $dpp$. Eventually, we fall into a loops or reach a fixed point in Definition \ref{dpploop}.  
\end{theorem}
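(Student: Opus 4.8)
The plan is to deduce the theorem from Lemma \ref{smallerthan1010} together with a finite computer verification whose scope has been cut down to a feasible size. By Lemma \ref{smallerthan1010}, the orbit of $\mathbf{dpp}$ starting from any natural number eventually enters the interval $[1,10^{10})$ and never leaves it, hence is eventually periodic. Consequently it suffices to prove the purely finite statement that the only cycles and fixed points of $\mathbf{dpp}$ on $[1,10^{10})$ are fixA, fixB, loop2, loop3, loop8, loop11, loop40 and loop97 of Definition \ref{dpploop}. Iterating $\mathbf{dpp}$ from each of the $10^{10}$ possible starting values is not practical (in particular not in the interpreted language of the accompanying programs), so the first task is to shrink the search.

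First I would note that every periodic point lies in a small, explicitly describable set. A periodic orbit is an orbit, so by Lemma \ref{smallerthan1010} it lies entirely in $[1,10^{10})$; in particular, if $p$ is periodic then $p=\mathbf{dpp}(q)$ for some $q$ in the same orbit with $q<10^{10}$, i.e.\ $q$ has at most ten digits. But $\mathbf{dpp}(q)$ depends only on the multiset of digits of $q$: if $c_d$ denotes the number of digits of $q$ equal to $d$, then $\mathbf{dpp}(q)=\sum_{d=0}^{9}c_d\,d^{d}$ with $\sum_{d}c_d\le 10$. Hence every periodic point of $\mathbf{dpp}$ belongs to
\[
I\;=\;\Bigl\{\,{\textstyle\sum_{d=0}^{9}}c_d\,d^{d}\;:\;c_d\in\mathbb{Z}_{\ge 0},\ {\textstyle\sum_{d=0}^{9}}c_d\le 10\,\Bigr\},
\]
a set of size at most $\binom{20}{10}=184756<2\times10^{5}$, each of whose elements is at most $10\cdot 9^{9}<3.8743\times10^{9}$.

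Then I would run the deterministic computation: for every $v\in I$, iterate $v,\mathbf{dpp}(v),\mathbf{dpp}^{2}(v),\dots$ until a value repeats, and record the cycle reached; all terms stay below $3.8743\times10^{9}$ (a number with at most ten digits maps under $\mathbf{dpp}$ to something at most $10\cdot 9^{9}$), so this is exact small-integer arithmetic. Gathering the cycles obtained over all $v\in I$ yields a finite list, and the computation shows it consists of exactly the fixed points and loops of Definition \ref{dpploop}. Since every periodic point of $\mathbf{dpp}$ lies in $I$ and every orbit is eventually periodic, this proves the theorem. The calculation is carried out in Section \ref{computerc} with the accompanying program.

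The main obstacle is not a single delicate estimate but making the search simultaneously feasible and provably exhaustive: the bound $10^{10}$ coming out of Lemma \ref{smallerthan1010} is far too large for a direct enumeration, and it is the digit-multiset observation that collapses the set of candidate periodic points to fewer than $2\times10^{5}$ elements. The points requiring care are that this candidate set $I$ really contains \emph{every} periodic point (which uses that a periodic orbit cannot escape $[1,10^{10})$ and that a value of $\mathbf{dpp}$ is determined by digit counts) and that the orbit computations are performed in exact integer arithmetic; once these are in place, the remaining verification is routine bookkeeping.
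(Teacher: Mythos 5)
Your proposal is correct and follows essentially the same route as the paper: Lemma \ref{smallerthan1010} reduces the theorem to a finite question below $10^{10}$, and the observation that $\mathbf{dpp}$ depends only on the multiset of digits collapses the search to a feasible computer verification, exactly as in Lemma \ref{lemmaforcom}, Lemma \ref{smallersets}, and the Computer Calculation of Section \ref{computerc}. The only cosmetic difference is that the paper also identifies the digits $0$ and $1$ (since $0^0=1^1=1$) and enumerates $92377$ multiset representatives of starting values, whereas you enumerate the at most $\binom{20}{10}=184756$ possible $\mathbf{dpp}$-values and argue via periodic points; both finite checks are equivalent in substance.
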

We prove this lemma in Section \ref{computerc}.

\section{Proofs by Computers}\label{computerc}
We prove Theorem \ref{theoremfordfp} and Theorem \ref{theoremfordpp} by computational analysis.
From Lemma \ref{looplemma1}, we need to prove Theorem \ref{theoremfordfp} for any natural number $n < 10^7$, and  from Lemma \ref{smallerthan1010}, we need to prove Theorem \ref{theoremfordpp} for 
for any natural number $n < 10^{10}$. However, we study a method to simplify the calculation.

\begin{lemma}\label{lemmaforcom}
Let $r$ be a natural number, and we select $r$ that is not necessarily distinct from the set $\{1,2,3,4,5,6,7,8,9\}$. Then, the total number of combinations is 
\begin{equation}
\binom{r+8}{r}.\label{r8r}
\end{equation}
\end{lemma}
\begin{proof}
The number of ways for choosing $r$
 elements from a set of $n$
 elements if repetitions are allowed is
\begin{equation}
\binom{n+r-1}{r}.\nonumber
\end{equation}
Therefore, we have (\ref{r8r}).
\end{proof}

\begin{lemma}\label{smallersets}
$(i)$ To prove Theorem \ref{theoremfordfp}, we only need to consider $11439$ numbers. \\
$(ii)$ To prove Theorem \ref{theoremfordpp}, we must consider $92377$ numbers.
\end{lemma}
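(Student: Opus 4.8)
The plan is to reduce both processes to a finite check over \emph{digit multisets} rather than over all integers below the respective bounds, and then count the relevant multisets. The key observation is that both $\mathbf{dfp}$ and $\mathbf{dpp}$ depend only on the multiset of nonzero digits of their argument: a digit $0$ contributes $0!=1$ to $\mathbf{dfp}$ and $0^0$ is taken to be $1$ in $\mathbf{dpp}$ (consistent with the examples in the excerpt, e.g.\ $\mathbf{dfp}(40585)$ includes a $0!$ term), so the value is a function of the unordered collection of digits only. Hence, for the purpose of proving Theorem~\ref{theoremfordfp} and Theorem~\ref{theoremfordpp}, two numbers with the same digit multiset have the same orbit under one application of the map, so we need only run the iteration once from a single representative of each multiset and check that it lands in one of the listed loops/fixed points.

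First I would make precise which multisets must be considered. By Lemma~\ref{looplemma1}, for $\mathbf{dfp}$ every orbit enters the set of numbers $<10^7$, i.e.\ numbers with at most $7$ digits; by Lemma~\ref{smallerthan1010}, for $\mathbf{dpp}$ every orbit enters the set of numbers $<10^{10}$, i.e.\ numbers with at most $10$ digits. So it suffices to verify the two theorems for all numbers with at most $7$ digits (resp.\ $10$ digits). Next, I would argue that a number with at most $r$ digits has a nonzero-digit multiset that is a selection of at most $r$ elements, with repetition, from $\{1,2,\dots,9\}$ — padding with $0$'s does not change the function value and does not change membership in a loop once we have verified the representative's orbit. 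Therefore the number of distinct inputs that need to be tested is the number of multisets of size at most $r$ from a $9$-element set.

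Then I would do the count. The number of multisets of size exactly $\rho$ from a $9$-set is $\binom{\rho+8}{\rho}$ by Lemma~\ref{lemmaforcom}, so the number of multisets of size at most $r$ is $\sum_{\rho=0}^{r}\binom{\rho+8}{\rho}=\binom{r+9}{r}$ by the hockey-stick identity. For $r=7$ this gives $\binom{16}{7}=11440$, and discarding the empty multiset (which corresponds to the number $0$, not a natural number, or to $n$ consisting only of zeros) leaves $11439$; for $r=10$ this gives $\binom{19}{10}=92378$, and again removing the empty multiset leaves $92377$. These match the claimed numbers, so part $(i)$ follows with the bound $10^7$ and part $(ii)$ with the bound $10^{10}$.

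The main obstacle is getting the bookkeeping around zeros and the empty multiset exactly right: one must check that appending zeros to a representative genuinely does not affect either the computed value of $\mathbf{dfp}/\mathbf{dpp}$ or the eventual loop, and one must justify subtracting exactly $1$ from the multiset count (the all-zeros / empty case) rather than, say, also excluding multisets that cannot arise as the digits of a number below the bound — but in fact every nonempty multiset of size $\le r$ from $\{1,\dots,9\}$ is realized by some genuine $r$-digit (or shorter) natural number, so no further exclusions are needed. Once this is pinned down, the statement is immediate from Lemma~\ref{lemmaforcom} and the hockey-stick summation, and the actual verification that each of the $11439$ (resp.\ $92377$) orbits terminates in the tabulated loops and fixed points is then a finite computation, carried out by the programs referenced in the paper.
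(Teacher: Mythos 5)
There is a genuine flaw in your reduction, located exactly where you yourself flagged the ``main obstacle'': the handling of zeros. Since $0!=1$ and $0^0$ is taken to be $1$, a digit $0$ contributes $1$ (not $0$) to $\mathbf{dfp}$ and $\mathbf{dpp}$, so the value is \emph{not} a function of the multiset of nonzero digits, and padding with zeros \emph{does} change the value. Concretely, $\mathbf{dfp}(100)=1!+0!+0!=3\neq 1=\mathbf{dfp}(1)$ and $\mathbf{dpp}(50)=5^5+0^0=3126\neq 3125=\mathbf{dpp}(5)$, so verifying the orbit of the representative $1$ (resp.\ $5$) does not verify the orbits of $100$ (resp.\ $50$); your scheme of one representative per nonzero-digit multiset leaves all numbers containing zeros uncovered. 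The paper's proof avoids this by the opposite identification: because $0!=1!=1$ and $0^0=1^1=1$, it treats the digit $0$ as the digit $1$ (rather than deleting it), which is value-preserving; a number with exactly $\rho\le 7$ (resp.\ $\le 10$) digits then corresponds to a multiset of size exactly $\rho$ from $\{1,\dots,9\}$, and one representative per such multiset genuinely covers everything below the bound.

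Your counting itself is fine and agrees with the paper: nonempty multisets of size at most $r$ from a $9$-element set are the same objects as multisets of size $1,\dots,r$, and your hockey-stick evaluation $\binom{r+9}{r}-1$ equals the paper's $\sum_{\rho=1}^{r}\binom{\rho+8}{\rho}$, giving $11439$ for $r=7$ and $92377$ for $r=10$. So the numerical conclusion and even the list of multisets to be tested are correct; what needs repair is only the coverage argument, which should say ``replace every $0$ by a $1$'' instead of ``drop the zeros.'' With that one-line change your proof becomes essentially the paper's proof.
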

\begin{proof}
$(i)$ From Lemma \ref{looplemma1}, we only need to consider numbers smaller than $10^7$, and 
$\{n:n < 10^7\} = \{n: \text{the length of digits is less than } 8\}.$
In the digit factorial process, the order of the digits is not relevant to the creation of the next number. Because $0!=1!=1$, we treat  $0$ and $1$ as the same number.
Therefore, from Lemma \ref{lemmaforcom}, 
the total number of numbers that must be considered is
\begin{equation}
\sum^{7}_{r=1}\binom{r+8}{r}=11439.\nonumber
\end{equation}
$(ii)$ From Lemma \ref{smallerthan1010}, we only need to consider numbers smaller than $10^{10}$, and
$\{n:n < 10^{10}\} = \{n: \text{the length of digits is less than } 11\}.$
In the digit power process, the order of the digits is not relevant to the creation of the next number. Because $0^0=1^1=1$, we can treat  $0$ and $1$ as the same number.
Therefore, from Lemma \ref{lemmaforcom}, the total number of numbers that must be considered is
\begin{equation}
\sum^{10}_{r=1}\binom{r+8}{r}=92377.\nonumber
\end{equation}
\end{proof}

\begin{computer}
We prove Theorem \ref{theoremfordfp} and Theorem \ref{theoremfordpp}
using the following program. This program is based on Lemma \ref{smallersets}.\\
\url{https://github.com/Shoei256/Curious-Properties-of-Iterative}\\
\url{-Sequences-programs/tree/main/others}
\end{computer}

\section{Prospect for Future Research}
Iterative functions are a good topic for high school or undergraduate research; however, studying well-known unsolved problems such as the Collatz Problem is not a good idea.

A good way to conduct research is to change some parts of the original problem, such as that in Section \ref{kaprekarva}.

We can also use $xor$ instead of $+$ in digit factorial or digit power processes.
After you study this process, please see the following program.\\
\url{https://github.com/Shoei256/Curious-Properties-of-Iterative-}\\
\url{Sequences-programs/tree/main/others/Digits%20Factorial%20XOR.py}

\end{document}